\documentclass[11pt, reqno]{amsart}

\usepackage{amsmath,amssymb,amsthm, mathrsfs,listings}
\usepackage{cleveref}
\usepackage{color}
\usepackage[all,cmtip]{xy}

\usepackage{amssymb,latexsym}
\usepackage{amsmath,amsfonts,amsthm,amscd,amsxtra,enumerate}
\usepackage{mathtools}

\usepackage[top=3.2cm,bottom=3.2cm,left=2.5cm,right=2.5cm]{geometry}

\usepackage{epsfig}

\newtheorem{theorem}{Theorem}[section]

\newtheorem{prop}[theorem]{Proposition}

\newtheorem{cor}[theorem]{Corollary}
\theoremstyle{definition}
\newtheorem{defn}[theorem]{Definition}
\newtheorem{remark}[theorem]{Remark}
\newtheorem{example}[theorem]{Example}

\newtheorem{setup}[theorem]{Setup}

\everymath{\displaystyle}

\DeclareMathOperator{\Hom}{Hom}

\DeclareMathOperator{\rank}{rank}

\DeclareMathOperator{\hgt}{ht}

\DeclareMathOperator{\ima}{Im}

\DeclareMathOperator{\depth}{depth}
\DeclareMathOperator{\grade}{grade}

\DeclareMathOperator{\reg}{reg} 
\DeclareMathOperator{\Tor}{Tor}

\newcommand{\p}{\mathfrak{p}}

\newcommand{\m}{\mathfrak{m}}

\newcommand{\sk}{{\sf k}}

\begin{document}

\title{\textbf{Diagonal Subalgebras of Residual Intersections}}

\author[H. Ananthnarayan]{H. Ananthnarayan}
\address{Department of Mathematics, I.I.T. Bombay, Powai, Mumbai 400076.}
\email{ananth@math.iitb.ac.in}

\author[N. Kumar]{Neeraj Kumar}
\address{Department of Mathematics, I.I.T. Bombay, Powai, Mumbai 400076.}
\email{neerajk@math.iitb.ac.in, neeraj@iith.ac.in}

\author[V. Mukundan]{Vivek Mukundan}
\address{Department of Mathematics, University of Virginia, Charlottesville, VA 22904.}
\email{vm6y@virginia.edu}
\subjclass[2010]{{Primary 13C40}; Secondary {13D02, 13H10}} 

\keywords{Koszul algebra, Cohen-Macaulay, Diagonal Subalgebra, Residual Intersection}

\date{\today}

\begin{abstract}
Let $\sk$ be a field, $S$ be a bigraded $\sk$-algebra, and $S_\Delta$ denote the diagonal subalgebra of $S$ corresponding to $\Delta = \{ (cs,es) \; | \; s \in \mathbb{Z} \}$. It is known that the $S_\Delta$ is Koszul for $c,e \gg 0$. In this article, we find bounds for $c,e$ for $S_\Delta$ to be Koszul, when $S$ is a geometric residual intersection. Furthermore, we also study the Cohen-Macaulay property of these algebras. Finally, as an application, we look at classes of linearly presented perfect ideals of height two in a polynomial ring, show that all their powers have a linear resolution, and study the Koszul, and Cohen-Macaulay property of the diagonal subalgebras of their Rees algebras.
\end{abstract}

\maketitle

\section*{Introduction}
Let $\sk$ be a field, $c,e \in \mathbb N$, and $\Delta = \{ (cs,es) \; | \; s \in \mathbb{Z} \}$ be the $(c,e)$-diagonal of $\mathbb{Z}^2$. Given a bigraded $\sk$-algebra $S= \bigoplus_{(u,v) \in {\mathbb{Z}^2_{\geq 0}}} S_{(u,v)}$, one can associate a graded $\sk$-algebra 
$S_{\Delta}= \oplus_{s \in \mathbb{Z}} S_{(cs,es)}$, the $(c,e)$-diagonal subalgebra of $S$. In a special case, when $S=\sk[x_1,\dots,x_n,y_1,\dots,y_p]$ with $\deg x_i =(1,0)$ and $\deg y_j =(0,1)$, then $S_{\Delta}$ is the homogeneous coordinate ring of the image of the correspondence under the Segre embedding $\mathbb{P}^{n-1} \times \mathbb{P}^{p-1} \hookrightarrow \mathbb{P}^{np-1}$ (cf. \cite{SimisTrungValla}).

The motivation for the study of diagonal subalgebras comes from algebraic geometry. Many authors have studied the algebraic properties (for instance, normality, Cohen-Macaulayness, defining equations etc.) of rational surfaces obtained by blowing up projective space $\mathbb{P}^2$ at a finite set of points, for instance see \cite{GerGim, GerGimHar, Hir}. Simis, Trung and Valla introduced diagonal subalgebras to study the normality and the Cohen-Macaulayness of rational surfaces obtained by blowing up projective space $\mathbb{P}^2$ along a subvariety (see \cite{SimisTrungValla}). Conca, Herzog, Trung and Valla used diagonal subalgebras as an effective tool for the study of normality, Cohen-Macaulayness, Gorensteinness and Koszulness of embedded $(n-1)$-fold obtained by blowing up $\mathbb{P}^{n-1}$ along a subvariety (cf. \cite{ConHerTruVal}).

The notion of residual intersections was introduced and initially studied by Artin-Nagata (\cite{ArtNag}), and Huneke-Ulrich (\cite{HunUlrResInt}). Our motivation for studying geometric residual intersections is the following: Morey and Ulrich (\cite{MorUlr}) show that if $I$ is a linearly presented perfect ideal of height two in a polynomial ring, its Rees algebra $\mathcal R(I)$ is a geometric residual intersection when $I$ satisfies certain conditions (see \Cref{setup}). For such an ideal $I$, the authors in \cite{ConHerTruVal} also show that $\mathcal R(I)_\Delta$ is Cohen Macaulay for $c,e \gg 0$. 

The Koszul property of the diagonal subalgebras of the Rees algebra of ideals generated by a regular sequence has been studied in multiple articles. In particular, for an ideal $I$, generated by a homogeneous regular sequence $f_1, f_2,f_3 \in \sk[x_1,\ldots, x_n]$ of the same degree, explicit lower bounds are obtained in \cite{CavCon} and \cite{Neeraj}, such that for larger values of $c$ and $e$, $\mathcal R(I)_\Delta$ is Koszul. More generally, the authors in \cite{ConHerTruVal} also show that for given any standard bigraded $\sk$-algebra $S$, one has $S_{\Delta}$ is Koszul for $c,e \gg 0$.

Motivated by the above results, we focus on giving explicit bounds for $c,e$ for diagonal subalgebras of certain classes of geometric residual intersections to be Koszul, or Cohen-Macaulay.  More specifically, in this article, given positive integers $m \geq n$, we study an ideal $J = \langle z_1, \ldots z_m \rangle + I_n(\phi)$ in the polynomial ring $S = \sk[x_1, \ldots, x_n, y_1, \ldots, y_p]$, where $\phi$ is an $n \times m$  matrix, with entries linear in $\underline y$, such that $ \underline z = \underline x \phi$. When $\hgt(J) \geq m$, and $\hgt(I_n(\phi)) \geq m - n + 1$, $J$ is a geometric residual intersection of $\langle x_1,\dots,x_n\rangle$. For such ideals, Bruns, Kustin and Miller  construct an $S$-free resolution of $S/J$ (\cite[Theorem 3.6]{BKM1}). 


Our main tool is to show that this resolution gives a bigraded resolution of $S/J$ over $S$, when $S$ is bigraded with $\deg(x_i) = (1,0)$ and $\deg(y_j) = (0,1)$. Using the construction given in \cite{BKM1}, we first compute the bigraded shifts in \Cref{bigradedShifts}, and use this to compute the bigraded Betti numbers, $x$-regularity and $y$-regularity of $S/J$ over $S$ in \Cref{Prop:shift}. In order to understand the Cohen-Macaulay property, we use this to give a bound on the depth of $(S/J)_\Delta$ in \Cref{thm:DepthGenResInt} for all $\Delta$. Moreover, we also show that it is Koszul when $e \geq n/2$, in \Cref{thm:KoszulGeomResInt}.

We give applications of the results in \Cref{Sec:DigGeoRes} to perfect ideals $I$ of height two, which are linearly presented in a polynomial ring $R = \sk[x_1,\ldots,x_n]$, which further satisfy $\mu(I_\p)\leq \dim(R_\p)$ for every $\p\in V(I)$ with $\dim(R_\p)\leq p-1$. This property can also be reformulated as the ideal $I$ being of \emph{linear type in the punctured spectrum}. In \Cref{thm:ReesAlg}, we first show that all powers of such ideals have a linear resolution, generalising a result of R\"omer (\cite[Corollary 5.11]{Tim}). We also prove that the diagonal subalgebras of the Rees algebra of $I$ are Cohen-Macaulay when $c>(p-1)e$, and are Koszul when $e \geq n/2$. The Koszul nature of diagonal subalgebras of the Rees algebras of such ideals has not been studied before.

\Cref{sec:prel} contains the definitions, basic observations, and known properties of the main objects appearing in this paper. We end the paper with an example in \Cref{Sec:Example} to understand the construction given in \Cref{Subsec:bigradedresGeoRes} explicitly, and to apply it to understand our results.
\subsection*{Acknowledgements} We thank the anaonymous referee for  comments and suggestions which improved the accuracy of the article greatly.

\section{Preliminaries}\label{sec:prel}
\subsection*{Notations}
\begin{enumerate}[a)]
\item $c$ and $e$ denote positive integers, $\Delta = \{(ci,ei)\ \vert\ i \in \mathbb Z\}$ is the $(c,e)$-diagonal of $\mathbb Z^2$, and for a real number $\alpha$, we let $\lceil \alpha \rceil$ denote the least integer greater than or equal to $\alpha$.

\item $\sk$ denotes a field, $\underline x = x_1, \ldots x_n$, $\underline y = y_1,\ldots, y_p$ denote sets of indeterminates over $\sk$, $R_x = \sk[\underline x]$ and $R_y = \sk[\underline y]$ are polynomial rings. 

\item $R$ denotes a \emph{graded $\sk$-algebra}, i.e., $R = \oplus_{i \geq 0} R_i$ is a graded ring with $R_0 = \sk$. 
Let $R_+ = \oplus_{i \geq 1} R_i$ be the unique homogenous maximal ideal of $R$. We say that $R$ is \emph{standard graded} if $R_+$ is generated by $R_1$. 

\item Given a graded $R$-module $M$ and $j \in \mathbb Z$, the \emph{shifted module} $M(j)$ is the graded $R$-module with $i$th graded component $M(j)_i = M_{i+j}$. In particular, $R(-j)$ is the graded free $R$-module of rank one, with the generator in degree $j$.

\item $S = \bigoplus_{(i,j) \in \mathbb Z^2_{\geq 0}} S_{(i,j)}$ denotes a bigraded $\sk$-algebra, i.e., $S$ is bigraded and $S_{(0,0)}= \sk$. Just as in the graded case, for integers $a,b$, the bigraded free $S$-module of rank one, generated in bidegree $(a,b)$, is denoted $S(-a,-b)$. 
\end{enumerate}

\subsection*{Eagon-Northcott Complex}

Let $S=R_x \otimes_\sk R_y = \sk[\underline x, \underline y]$ be a polynomial ring over $\sk$. Let $m \geq n$, and $\phi$ be an $n \times m$ matrix with entries in $R_y$. Define $z_1, \ldots, z_m \in S$ by 
\begin{align*}
\begin{bmatrix}
z_1 &z_2 &\ldots &z_m
\end{bmatrix}=\begin{bmatrix}
x_1 & x_2 &\ldots& x_n
\end{bmatrix}\cdot \phi.
\end{align*}

Consider the Koszul complex $\mathbb{K}_{\bullet}(\underline{z}; S)$ on $\underline z = z_1, z_2, \ldots, z_m$:
\begin{align*}
0 \longrightarrow S(-m)^{\oplus{ m \choose m}}  \xrightarrow{\phi^m} S(-m +1)^{\oplus {m \choose m-1}} \xrightarrow{\phi^{m-1}} \cdots  \longrightarrow S(-2)^{\oplus{m \choose 2}} \xrightarrow{\phi^2}   S(-1)^{\oplus{m \choose 1}} \xrightarrow{\phi^1} S\rightarrow 0.
\end{align*}
The $d$-th graded component, with respect to the $x$-grading, of the above complex gives us a complex of free $R_y$-modules:
\begin{align}\label{eqn:Koszul complex on z_i-d}
\mathbb{K}_{\bullet}(\underline{z}; S)_d: \hfill 0 \longrightarrow S_{d-m} \xrightarrow{\phi_d^m} (S_{d-m+1})^{\oplus {m \choose m-1}} \longrightarrow \cdots  \longrightarrow (S_{d-2})^{\oplus {m \choose 2}} \xrightarrow{\phi_d^2} (S_{d-1})^{\oplus {m \choose 1}} \xrightarrow{\phi_d^1} S_d \; .
\end{align}
Applying $\Hom_{R_y}(\_\!\_, R_y) = (\_\!\_)^\ast$ to \Cref{eqn:Koszul complex on z_i-d}, we get a new complex of $R_y$-modules: 
\begin{align}\label{eqn:Koszul complex dual on z_i-d}
\mathbb{K}_{\bullet}(\underline{z}; S)_d^{\ast}: \; 0  \rightarrow  S_d^{\ast}  \xrightarrow{{(\phi_d^1)^*}}  (S_{d-1}^{\ast})^{\oplus {m \choose 1}}   \longrightarrow \cdots  \xrightarrow{{(\phi_d^{m-1})^*}}  (S_{d-m+1}^{\ast})^{\oplus {m \choose m-1}} \xrightarrow{(\phi_d^m)^*} S_{d-m}^{\ast}  \rightarrow 0.
\end{align}

\begin{defn}{[Eagon-Northcott Complex]}\label{EagonNorthcott}
With the notation as above, the Eagon-Northcott complex of the matrix $\phi$ is given by 
 \begin{align*}
\mathbb{EN}: 0  \rightarrow  S_{m-n}^{\ast}  \xrightarrow{(\phi_{m-n}^1)^*}  (S_{m-n-1}^{\ast})^{\oplus {m \choose 1}}  \longrightarrow  \cdots  \longrightarrow  (S_{1}^{\ast})^{\oplus {m \choose m-n-1}} \xrightarrow{(\phi_{m-n}^{m-n})^*} (S_{0}^{\ast} )^{m\choose m-n}\xrightarrow{\epsilon} R_y \rightarrow 0
 \end{align*}
 where the map $\epsilon$ is given by the $1\times {m\choose m-n}$ row matrix whose entries are the $(n\times n)$ - minors of the matrix $\phi$. 
 \end{defn}
 
\begin{remark}{\rm
Notice that the first $m-n$ components of $\mathbb{EN}$ are nothing but $\mathbb{K}_{\bullet}(\underline{z}; S)_{m-n}^{\ast}$. The complex $\mathbb{EN}$ is exact when $\grade \left(I_n(\phi)\right)=m-n+1$ (e.g., \cite[Theorem A2.10]{EisenbudCommAlg}). Observe that the homology at the zeroth stage of the above complex is $R_y/(\ima (\epsilon))=R_y/I_n(\phi)$. Hence, if $\grade I_n(\phi)=m-n+1$, we see that $\mathbb{EN}$ gives a minimal free resolution of $R_y/I_n(\phi)$ over $R_y$.
}\end{remark}

\subsection*{Segre Product, Veronese and diagonal subalgebras}

\begin{defn} Let $c,e \in \mathbb N$, $S$ a bigraded $\sk$-algebra, $W$ a bigraded $S$-module, $A$ and $B$ be graded $\sk$-algebras, $M$ and $N$ graded $A$ and $B$ modules respectively. 
\begin{enumerate}[a)]
\item The \emph{Segre product} of $A$ and $B$ is defined as $A \underline{\otimes}_{\sk} B   = \bigoplus_{i \geq 0} ( A_i \otimes B_i) $. The Segre product of $M$ and $N$ is the $A \underline{\otimes}_{\sk} B$-module $M\underline{\otimes}_{\sk} N = \bigoplus_{i \in \mathbb{Z}}  (M_i \otimes N_i) $.

\item The $c$-th \emph{Veronese subring} of $A$ is is the graded $\sk$-algebra $A^{(c)} = \bigoplus_{i \geq 0} A_{ci}$. 

\item The $(c,e)$-\emph{diagonal subalgebra} of $S$ is the graded $\sk$-algebra $S_{\Delta} = \bigoplus_{i \geq 0} S_{(ci,ei)}$, and the $(c,e)$-\emph{diagonal module} corresponding to $W$ is the graded $S_{\Delta}$-module $W_\Delta = \bigoplus_{i \geq 0} W_{(ci,ei)}$. 
\end{enumerate}
\end{defn}

\begin{remark}\label{remarkBigraded}
{\rm Let the notation be as in the above definition.

\begin{enumerate}[a)]
\item $(\_\!\_)_\Delta$ is an exact functor from the category of bigraded $S$-modules to the category of graded $S_\Delta$-modules. However, for  integers $a, b$, note that $S(-a,-b)_{\Delta} = \bigoplus_{i \in\mathbb{Z}} S_{(-a+ci, -b + ei)}$ need not be a free $S_\Delta$-module.
\item For a bihomogeneous ideal $J$ in $S$, $(S/J)_{\Delta}$ has a natural graded $\sk$-algebra structure by definition.
\item The $\sk$-algebra $A \otimes_{\sk} B$ is naturally bigraded and the Segre product $A \underline{\otimes}_{\sk} B$ is the $(1,1)$-diagonal subalgebra of $A \otimes_{\sk} B$. More generally, the $(c,e)$-diagonal subalgebra of $A \otimes_{\sk} B$ is the Segre product of the Veronese subalgebras $A^{(c)}$ and $B^{(e)}$.
\item \label{rem:S(-a,-b)deltaCM}  Let $S = \sk[x_1,\ldots,x_n,y_1\ldots,y_p]$ with bigrading $\deg(x_i) = (1,0)$ and $\deg(y_j) = (0,1)$ for all $i$ and $j$. By \cite[Lemmas $3.1$, $3.3$]{ConHerTruVal} we have $\dim \left( S(-a,-b)_{\Delta} \right) = p + n -1$. Finally, \cite[Lemma 3.10]{ConHerTruVal} shows that $S(-a,-b)_{\Delta}$ is Cohen-Macaulay 
if and only if the following two conditions are satisfied: \\(i) $\left\lfloor\frac{a-n}{c} \right\rfloor < \frac{b}{e}$ and (ii) $ \left\lfloor\frac{b-p}{e} \right\rfloor <\frac{a}{c}$.
\end{enumerate}
}
\end{remark}


\subsection*{Residual Intersections and Rees Algebras}
Let $R$ be any Cohen-Macaulay local ring, $I \subset R$ be an ideal and $\underline z = z_1,\dots,z_m \in I$ with $\langle \underline z \rangle \neq I$.
\begin{defn}
	We say $J=\langle \underline z \rangle:I$ is an $m$-residual intersection of $I$ if $\hgt (J)\geq m\geq \hgt (I)$. Furthermore, if $I_{\p}=\langle \underline z \rangle_{\p}$ for all $\p\in V(I)$ with $\hgt (\p)\leq m$, then $J$ is called an $m$-geometric residual intersection of $I$.
\end{defn}

\begin{example}
{\rm
	Consider the ring $R=k[x_1,x_2,y_1,\ldots,y_4], I=(x_1,x_2)$ and $[z_1~z_2]=[x_1~x_2]\cdot \phi$ where $\phi=\begin{bmatrix}
	y_1 & y_3\\
	y_2 & y_4
	\end{bmatrix}$. Then $\langle z_1,z_2\rangle :I$ is a geometric residual intersection of $I$ \cite[Theorem 3.3(i)]{HunUlrResInt}.
}
\end{example}

\begin{defn}\label{ReesAlgebraDefn} Let $I = \langle f_1, \ldots, f_p \rangle$ be a homogeneous ideal in $R_x = \sk[x_1,\dots,x_n]$. We say that $\Phi$ is a presentation matrix of $I$ if $(\sk[\underline x])^m\overset{\Phi}\longrightarrow(\sk[\underline x])^p\rightarrow I\rightarrow 0$ is exact.  The \emph{Rees algebra} of $I$ is the subalgebra of $\sk[\underline x,t]$ defined as $\mathcal{R}(I) = \sk[\underline x, f_1t, \ldots, f_pt]$. 
\end{defn}

\begin{remark}\label{remarkReesAlgebras}
{\rm Let $S = R_x \otimes_\sk R_y = \sk[x_1,\dots,x_n, y_1,\ldots, y_p]$.
\begin{enumerate}[a)]

\item The ring $S$ maps naturally onto $\mathcal{R}(I)$ by $x_i \mapsto x_i$ and $y_j \mapsto f_jt$. The kernel $\mathcal{K}$ of this map is called the defining ideal of the Rees algebra $\mathcal{R}(I)$.

\item Let $[z_1~\cdots ~z_m]=[y_1~\cdots y_p]\cdot\Phi$, where $\Phi$ is a $p\times m$ presentation matrix of $I$. Then we have $\langle z_1,\dots,z_m\rangle \subset \mathcal{K}$. If  $\mathcal{K}=\langle z_1,\dots,z_m\rangle$, then we say that $I$ is an ideal of \textit{linear type}.

\item \label{rem:dimofdiagonalofreesalgebra} Assume that $I^e$ is generated by forms of degree atmost $c-1$. By \cite[Lemma $1.2$ and $1.3$]{ConHerTruVal}, we get that $\dim \left( \mathcal{R}(I)_{\Delta} \right) = n$.
\item \label{rem:dimofdiagonalofreesalgebraForc=e=1} If $\Delta=(c,e)=(1,1)$, then by \cite[Section 3, Proposition 2.3]{SimisTrungValla}, we have $\dim \left( \mathcal{R}(I)_{\Delta} \right) = n$.
\end{enumerate}
}\end{remark}

\subsection*{Regularity and the Koszul property}

\begin{defn}
Let $R$ be a standard graded $\sk$-algebra, and $M$ be a finitely generated graded $R$-module with $(i,j)$-th graded Betti number $\beta_{ij}^R(M)=\dim_{\sk} \left( \Tor_i^R(M,\sk)_j \right)$.
\begin{enumerate}[a)]
\item The \emph{Castelnuovo-Mumford regularity} of $M$ over $R$ is
$\reg_R(M)= \sup_{i \geq 0} \left\{   j - i  \;| \;  \beta_{ij}^R(M) \neq 0  \right\}$.
\item Let $I$ be a homogeneous ideal in $R$ generated in degree $d$. Then $I$ has a linear resolution if $\reg_R(I) = d$, i.e., if for all $i$, $\beta_{ij}^R(I) = 0$ for $j \neq i+ d$.
\item We say that $R$ is a \emph{Koszul algebra} if $\reg_R(\sk) = 0$, i.e., if for all $i$, we have $\beta_{ij}^R(\sk) = 0$ for $j \neq i$.
\end{enumerate}
\end{defn}

\begin{defn}
Let $S = \sk[\underline x, \underline y]$ with $\deg (x_i)=(1,0)$ and $\deg (y_j) = (0,1)$ be a bigraded polynomial ring, and $J \subset S$ a bigraded ideal. If $\beta_{i,(a,b)}^S(S/J)=\dim_{\sk} \left( \Tor_i^S(S/J,\sk)_{(a,b)} \right)$ denote the bigraded Betti numbers of $S/J$, we define the $x$-regularity and $y$-regularity of $S/J$ to be: 
\[
\begin{split}
\reg_{x}^{S}(S/J)&= \sup\{ a -i ~ |~ \beta_{i,(a,b)}^S(S/J) \neq 0  \text{ for some $i,b \in \mathbb{Z} $}  \},\\
\reg_{y}^{S}(S/J)&= \sup\{ b-i ~ |~  \beta_{i,(a,b)}^S(S/J) \neq 0  \text{ for some $i,a \in \mathbb{Z} $}    \}.\
\end{split}
\]
\end{defn}

\begin{remark}\label{rem:regularity}
{\rm Let $R$ be a standard graded $\sk$-algebra, and $S = R_x\otimes_{\sk} R_y$ be naturally bigraded by setting $\deg (x_i)=(1,0)$ and $\deg (y_j) = (0,1)$.

\begin{enumerate}[a)]
\item\label{reglemma} Given an exact sequence $0 \longrightarrow N_m \longrightarrow \cdots \longrightarrow N_2 \longrightarrow N_1 \longrightarrow N_0 \longrightarrow M \longrightarrow 0$  of graded $R$-modules, repeated application of the depth lemma (e.g., \cite[Corollary 18.16]{EisenbudCommAlg} ) and the regularity lemma (e.g., \cite[Corollary $20.19$]{EisenbudCommAlg}) respectively yield the following:\\
$\depth_R(M) \geq \min\{\depth_R(N_i) - i\ \vert\ 0 \leq i \leq m\}$ and $\reg_R(M) \leq \max\{\reg_R(N_i) - i\ \vert\ 0 \leq i \leq m\}$.

\item\label{thm:Froberg}\textnormal{\cite[Theorem 4]{BacFro}}
The Segre product of two Koszul algebras are Koszul. Moreover, Veronese subrings of Koszul algebras are Koszul.
In particular, $S_{\bigtriangleup}= (R_x)^{(c)} \underline{\otimes} (R_y)^{(e)}$ is Koszul.

\item \cite[Lemma $6.5$]{ConHerTruVal}\\ $\reg_{\left(R_x^{(c)} \underline{\otimes} R_y^{(e)} \right)} \left(R_x(-a)^{(c)}  \underline{\otimes } \; R_y(-b)^{(e)}\right)  
=  \max \left\{   \reg_{R_x^{(c)}} \left(R_x(-a)^{(c)}\right)   ,  \reg_{R_y^{(e)}} \left(R_y(-b)^{(e)}\right)  \right\}$.

\item\label{regofshifteddiagonal} By \cite[Theorem $2.1$]{ABH}, we have $\reg_{R_x^{(c)}} \left(R_x(-a)^{(c)}\right) = \left\lceil \frac{a}{c}\right\rceil$.\\ Hence by (c), we get $\reg_{S_{\Delta}}\left(S(-a,-b)_{\Delta}\right) = \max\left\{ \left\lceil \frac{a}{c} \right\rceil, \left\lceil \frac{b}{e} \right\rceil \right\}$.

\item\label{thm:koszulcriterion}\textnormal{\cite[Lemma $6.6$]{ConHerTruVal}}
Let $I$ be a homogeneous ideal in $R$ with $\reg_R (R/I) \leq 1$. If $R$ is Koszul, then so is $R/I$.

\item \cite[Theorem $5.3$]{Tim}\label{thm:Romer} Let $I = \langle f_1, \ldots, f_p \rangle \subset R_x$ be a graded ideal generated in degree $d$. Write the Rees algebra of $I$ as $\mathcal{R}(I)\cong S/\mathcal{K}$ (as in \Cref{remarkReesAlgebras} (a)). Then $ \reg_{R_x}(I^s) \leq s d +    \reg_{x}^{S}\left( \mathcal{R}(I) \right)$. In particular, if $\reg_x(\mathcal R(I)) = 0$, then $I^s$ has a linear resolution for all $s$.
\end{enumerate}
}
\end{remark}


\section{Geometric Residual Intersections and their Diagonal Subalgebras}\label{Sec:DigGeoRes}

Let $S=R_x[y_1,\ldots,y_p]$ be a polynomial ring over $R_x$, and $\m = \langle \underline x\rangle$. Define a bigrading on $S$ by setting $\deg(x_i)=(1,0),\deg(y_j)=(0,1)$ for $1\leq i\leq n$ and $1\leq j\leq p$.

As in the definition of the Eagon-Northcott complex, let $m \geq n$, and $\phi$ be an $n \times m$ matrix with linear entries in $R_y$, and let $\underline z = z_1, \ldots, z_m \in S$ be given by 
\begin{align*}
\begin{bmatrix}
z_1 &z_2 &\ldots &z_m
\end{bmatrix}=\begin{bmatrix}
x_1 & x_2 &\ldots& x_n
\end{bmatrix}\cdot \phi.
\end{align*}

We study the bigraded $S$-ideals of the form $J=\langle \underline z\rangle+I_n(\phi)$ where $\grade (I_n(\phi))\geq m-n+1$. Suppose $\hgt(J)\geq m$. Since $J\subseteq \langle \underline{z} \rangle:\m$ (e.g., by Cramer's Rule), we see that $\langle \underline z\rangle:\m$ is an $m$-residual intersection of $\m$. If we further assume that $\langle\underline z\rangle:\m$ is a geometric $m$-residual intersection of $\m$, then it is shown in the proof of \cite[Theorem 4.8]{BKM1} that equality holds, i.e., $J = \langle \underline z\rangle:\m$. 

With $J$ as above, the authors in \cite{BKM1} construct an $S$-free resolution of $S/J$. We show that this is a bigraded resolution, and begin with a brief review of the construction in \cite{BKM1} below. 

\subsection*{Bigraded resolution of $S/J$}\label{Subsec:bigradedresGeoRes} With notation as above, let $\mathbb{K}_{\bullet}(\underline{x}; S)$ be the Koszul complex on the sequence $x_1,\dots,x_n$.  As in the construction of the Eagon-Northcott complex, let $\mathbb{K}_{\bullet}(\underline{x}; S)_d$ be the corresponding $d$-th graded component with differentials $\psi^i_d$.  Set $K_a^b=\ker ((\psi_{a+n-b}^{n-b+1})^*)$.

\begin{remark}\label{rankOfKG} In \cite{BKM1}, the authors have denoted the kernel $K_a^b$ as $K_a^b(G)$ and the map $(\psi_{a+n-b}^{n-b+1})^*$ by $\eta_{a}^{b}$, hence the need for the notation of $K_a^b$ presented above. In (cf. \cite[Proposition 1.13]{BKM1}), the authors also show that each $K_a^b$ is a free $R_y$-module with $\rank_{R_y}(K_a^b) =  \binom{n+a-1-b}{a}\binom{n+a}{b}$ .
\end{remark}

Consider the following bi-complex $\mathbb{B}_{\bullet}$ of free $S$-modules:

 \begin{align}\label{bicomplex}
 \xymatrix{
 	0\ar[r] & K_{m-n}^{n-1}\otimes S \ar[r] \ar[d]& \cdots \ar[r]& K_1^{n-1}\otimes S^{m\choose n+1}\ar[r]\ar[d]&K_0^{n-1}\otimes S^{m\choose n}\ar[r]\ar[d] &S^{m\choose n-1}\ar[d]^{\phi^{n-1}}\\
 	0\ar[r] & K_{m-n}^{n-2}\otimes S\ar[r] \ar[d]& \cdots \ar[r]&K_1^{n-2}\otimes S^{m\choose n+1}\ar[r]\ar[d] &K_0^{n-2}\otimes S^{m\choose n}\ar[r]\ar[d] &S^{m\choose n-2}\ar[d]^{\phi^{n-2}}\\
 	& \vdots &&\vdots &\vdots& \vdots\\
 	0\ar[r] & K_{m-n}^{1}\otimes S\ar[r] \ar[d]& \cdots \ar[r]&K_1^{1}\otimes S^{m\choose n+1}\ar[r]\ar[d] &K_0^{1}\otimes S^{m\choose n}\ar[r]\ar[d] &S^{m\choose 1}\ar[d]^{\phi^{1}}\\
 	0\ar[r] & K_{m-n}^{0}\otimes S\ar[r] & \cdots \ar[r]&K_1^{0}\otimes S^{m\choose n+1}\ar[r] &K_0^{0}\otimes S^{m\choose n}\ar[r] &S.
 }
 \end{align}

Under the given conditions, it was proved in \cite[Theorem 3.6]{BKM1} that the total complex $\mathbb{T}_{\bullet}$ of the bicomplex  $\mathbb{B}_{\bullet}$ is a free resolution of $S/J$. Thus, we see that 
\begin{align}\label{total complex}
\mathbb{T}_{\bullet}:\quad 0\rightarrow P_m\rightarrow P_{m-1}\rightarrow\cdots\rightarrow P_{n}\rightarrow Q_{n-1}\oplus P_{n-1}\rightarrow\cdots\rightarrow Q_1\oplus P_1\rightarrow Q_0\rightarrow S/J \rightarrow 0
\end{align}
is an $S$-free resolution of $S/J$, with $Q_i=S^{m\choose i}$ for $0 \leq i \leq n-1$, and $P_i=\bigoplus K_a^b\otimes S^{m\choose n+a}$, $1 \leq i \leq m$, where the direct sum is over all $(a,b)$ such that 
\begin{align}\label{limitsForab}
a+b=i-1, && 0\leq a\leq m-n,&& 0\leq b\leq n-1. 	
\end{align}  

We show that this is a bigraded resolution of $S/J$ over $S$ by computing the bigraded shifts in $\mathbb{T}_\bullet$. In order to do this, we first compute the bigraded shifts in the bicomplex $\mathbb {B}_\bullet$.
\begin{remark}\label{bigradedShifts}
The bigraded shifts appearing in $\mathbb {B}_\bullet$ are as follows:
\begin{enumerate}[a)]
	\item The column on the right of the above diagram is the truncation of the Koszul complex $\mathbb{K}_{\bullet}(\underline{z}; S)$. Thus the maps on the last column are the differentials $\phi^i$, which are bigraded maps of bidegree $(1,1)$.
	\item Using \cite[Lemma 2.5]{BKM1} the maps appearing in the other columns are induced by $\mathbb{K}_\bullet(\underline{x};S)$ and hence are bigraded maps of bidegree $(1,0)$. 
	\item The horizontal map between the last two columns is the augmentation map $\epsilon^b$ as explained in \cite[Corollary 2.7]{BKM1}. Thus this augmentation map $K_0^b\otimes S^{m\choose n}\rightarrow S^{m\choose b}$ is a bigraded map of bidegree $(0,n-b)$.
	In particular, when $b=0$, since $K_0^0=R$,  the map $K_0^0\otimes S^{m\choose n}\rightarrow S$ is induced by the $\epsilon$ map appearing in the Eagon-Northcott complex (\Cref{EagonNorthcott}).
	\item Using \cite[Lemma 2.6]{BKM1} the other/remaining horizontal maps in the rows are bigraded maps of bidegree $(0,1)$.
	In particular, since $K^0_a\cong (S_a)^*$, the maps in the last row of the bicomplex $\mathbb{B}_\bullet$ are induced by the maps in the Eagon-Northcott complex (\Cref{EagonNorthcott}).
\end{enumerate}
\end{remark}

The bigraded shifts of each free module appearing in the bicomplex $\mathbb B_\bullet$ are determined by the fact that each homomorphism has bidegree $(0,0)$. Hence, the domain of a bigraded map of bidegree $(a,b)$ is shifted by $(-a,-b)$. For ease of reference, we rewrite the bicomplex $\mathbb B_\bullet$ with only the bigraded shifts: 
\begin{align}
\xymatrix{
	0\ar[r] & (-(n-1),-m) \ar[r] \ar[d]& \cdots \ar[r]&  (-(n-1),-n)\ar[r]\ar[d] &(-(n-1),-(n-1))\ar[d]^{\phi^{n-1}}\\
	& \vdots &\vdots &\vdots& \vdots\\
	0\ar[r] & (-1,-m) \ar[r]\ar[d]& \cdots \ar[r]&(-1,-n)\ar[r]\ar[d] &(-1,-1)\ar[d]^{\phi^{1}}\\
	0\ar[r] & (0,-m) \ar[r]& \cdots \ar[r]&(0,-n)\ar[r] &(0,0).
}
\end{align}

Thus, we get the following:
\begin{prop}\label{Prop:shift}
	The bigraded shifts appearing in the $S$-free resolution \eqref{total complex} of the residual intersection $S/J$ 
	are as follows:
	\begin{align}
	Q_k&=S(-k,-k)^{\binom{m}{k}}, & 0\leq k\leq n-1;\\
	P_i&=\bigoplus_{j} S(-j,-(n+(i-1)-j))^{r(i,j)}, & 1\leq i\leq m, \label{formula for Pi}
	\end{align}
	where $\; \max\{0,i-(m-n+1)\} \; \leq \;  j \; \leq \; \min\{i-1,n-1\}$ and $$r(i,j)={n+i-2j-2\choose i-1-j}{n+i-1-j\choose j}{m\choose n+i-1-j}.$$
In particular, we have $\reg_x(S/J) = 0$, $\reg_y(S/J) = (n-1)$, and the bigraded Betti numbers of $S/J$ are given by 
\begin{align*}
\beta_{i,(a,b)}^S(S/J) =\begin{cases}
{m\choose i} & 0\leq i=a=b\leq n-1\\
r(i,a) & 1\leq i\leq m,  b=n+i-1-a, \\&\max\{0,i-(m-n+1)\} \; \leq \;  a \; \leq \; \min\{i-1,n-1\}\\
0 &\mathrm{\text{otherwise}}
\end{cases}
\end{align*}
\end{prop}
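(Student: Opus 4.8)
The plan is to read off everything from the total complex $\mathbb{T}_\bullet$ in \eqref{total complex}, which \cite[Theorem 3.6]{BKM1} already guarantees is an $S$-free resolution of $S/J$; the only work is to equip it with bigraded shifts and then check minimality. First I would propagate the map bidegrees recorded in \Cref{bigradedShifts} through the bicomplex $\mathbb{B}_\bullet$, starting from the bottom-right corner $S = S(0,0)$. Reading up the rightmost column, each vertical differential $\phi^b$ has bidegree $(1,1)$, so the entry $Q_b = S^{\binom{m}{b}}$ in row $b$ carries shift $(-b,-b)$, which gives $Q_k = S(-k,-k)^{\binom{m}{k}}$. For a summand $K_a^b \otimes S^{\binom{m}{n+a}}$ I would combine the vertical maps of bidegree $(1,0)$ (contributing the $x$-shift $-b$) with the horizontal maps of bidegrees $(0,n-b)$ and $(0,1)$ (contributing the $y$-shift $-(n+a)$), obtaining shift $(-b,-(n+a))$. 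Feeding in $\rank_{R_y} K_a^b = \binom{n+a-1-b}{a}\binom{n+a}{b}$ from \Cref{rankOfKG} together with the factor $\binom{m}{n+a}$ produces the rank of each summand.

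Next I would reindex the terms of $P_i$. Since $K_a^b \otimes S^{\binom{m}{n+a}}$ lies in homological degree $i = a+b+1$, I set $j$ to equal the row index (so the summand is $K_{i-1-j}^{\,j}\otimes S^{\binom{m}{n+i-1-j}}$). Then the shift $(-b,-(n+a))$ becomes $S(-j,-(n+(i-1)-j))$ and the rank becomes exactly $r(i,j)$, while the constraints $0 \le a \le m-n$ and $0 \le b \le n-1$ translate into $\max\{0, i-(m-n+1)\} \le j \le \min\{i-1,n-1\}$. This establishes \eqref{formula for Pi} and the formula for $Q_k$.

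The step that converts shifts into Betti numbers is minimality. I would observe that every map listed in \Cref{bigradedShifts} has bidegree of strictly positive total degree, namely $(1,1)$, $(1,0)$, $(0,1)$, or $(0,n-b)$ with $b \le n-1$; hence every entry of every differential of $\mathbb{T}_\bullet$ lies in the homogeneous maximal ideal $S_+$, so $\mathbb{T}_\bullet$ is a \emph{minimal} free resolution and $\beta_{i,(a,b)}^S(S/J)$ is just the number of degree-$(a,b)$ generators of the $i$-th free module. To ensure these counts are not being summed across summands, I would check that distinct summands in a fixed homological degree carry distinct bidegrees: the summands of $P_i$ have pairwise distinct first coordinates $j$, and the bidegree $(i,i)$ of $Q_i$ cannot coincide with any $(j, n+i-1-j)$ for $j \le \min\{i-1,n-1\}$. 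Assembling the two families then yields the claimed Betti table.

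The two regularities then drop out by optimizing the table. For $\reg_x$ I note $a-i=0$ on the $Q$-terms while $a-i = j-i \le -1$ on the $P$-terms, so the supremum is $0$. For $\reg_y$ I write $b-i = n-1-j$ on the $P$-terms, maximized at $j=0$; this value is attained since $r(i,0) = \binom{n+i-2}{i-1}\binom{m}{n+i-1} \ne 0$ for $1 \le i \le m-n+1$, and the $Q$-terms only contribute $0$, giving $\reg_y = n-1$. I expect the whole argument to be bookkeeping once \Cref{bigradedShifts} and \Cref{rankOfKG} are available; the only genuinely delicate points are the reindexing $(a,b)\mapsto(i,j)$, where the symbol $a$ plays two different roles, and the verification that the regularity suprema are actually attained by confirming the relevant $r(i,j)$ is nonzero.
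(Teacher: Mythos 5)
Your proposal is correct and follows essentially the same route as the paper: propagate the map bidegrees recorded in \Cref{bigradedShifts} through the bicomplex $\mathbb{B}_\bullet$ starting from $S(0,0)$, pass to the total complex, and reindex via $j=b$, $a=i-1-j$ to obtain \eqref{formula for Pi}. The paper leaves the minimality of $\mathbb{T}_\bullet$ (and hence the passage from shifts to Betti numbers and regularity) implicit, so your explicit checks that every differential has entries in $S_+$, that distinct summands in a fixed homological degree carry distinct bidegrees, and that the regularity suprema are attained are a careful tightening of the same argument rather than a different approach.
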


We conclude this section with the following remark, which is used  to discuss the Koszul, and Cohen-Macaulay properties, of $(S/J)_\Delta$ in the next two subsections.

\begin{remark}\label{rem:DepthRegularity}{\rm
The exactness of $(\_\!\_)_\Delta$ shows that 
\begin{align*}
 0\rightarrow (F_m)_{\Delta} \rightarrow \cdots\rightarrow (F_{i})_{\Delta}\rightarrow  \cdots \rightarrow (F_1)_{\Delta} \rightarrow S_{\Delta} \rightarrow (S/J)_\Delta \rightarrow 0
 \end{align*} is exact. 
 
 Thus, by \Cref{rem:regularity}(\ref{reglemma}), we see that $\depth_{S_{\Delta}} (S/J)_{\Delta} \geq \min \{ \depth_{S_{\Delta}} (F_i)_{\Delta} - i\ \vert\ 0\leq i\leq m \}$, and $\reg_{S_{\Delta}} (S/J)_{\Delta} \leq \max \{ \reg_{S_{\Delta}} (F_i)_{\Delta} -i\ \vert\ 0\leq i\leq m \}.$
}\end{remark}

\subsection*{Depth and the Cohen-Macaulay Property of $(S/J)_\Delta$}\label{subsec:CMDiagSubalg}\hfill{}\\
In this subsection, we retain the setup of the previous one, and identify lower bounds on the depth of $(S/J)_\Delta$, which gives sufficient conditions for it to be Cohen-Macaulay. 

\begin{theorem}\label{thm:DepthGenResInt}
	Suppose $p>m\geq n$, then $\depth (S/J)_\Delta\geq p+n-(m+1)$ for all $\Delta$.
\end{theorem}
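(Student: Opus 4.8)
The plan is to run the bigraded $S$-free resolution $\mathbb T_\bullet$ of $S/J$ through the exact functor $(\_\!\_)_\Delta$ and estimate depth term by term. By \Cref{rem:DepthRegularity} it is enough to show that $\depth_{S_\Delta}(F_i)_\Delta \ge p+n-(m+1)+i$ for each $0\le i\le m$, where $F_i$ denotes the $i$-th free module of $\mathbb T_\bullet$. Since the depth of a finite direct sum is the minimum of the depths of its summands, and \Cref{Prop:shift} exhibits each $(F_i)_\Delta$ as a direct sum of modules of the form $S(-a,-b)_\Delta$, everything reduces to a lower bound on $\depth_{S_\Delta}S(-a,-b)_\Delta$ for the specific shifts $(a,b)$ occurring in homological degree $i$.

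For the depth of these modules I would use the Segre-product description $S(-a,-b)_\Delta = R_x(-a)^{(c)}\,\underline{\otimes}\,R_y(-b)^{(e)}$ over $S_\Delta=R_x^{(c)}\,\underline{\otimes}\,R_y^{(e)}$ (\Cref{remarkBigraded}), together with the Künneth-type formula (Goto--Watanabe) for the local cohomology of a Segre product and the fact that the Veronese functor commutes with local cohomology. Because $R_x$ and $R_y$ are polynomial rings, $H^i_{\mathfrak m_x}(R_x(-a))$ vanishes unless $i=n$ and $H^j_{\mathfrak m_y}(R_y(-b))$ vanishes unless $j=p$; the Künneth formula then shows that $H^k_{\mathfrak M}\big(S(-a,-b)_\Delta\big)$ can be nonzero only for $k\in\{n,\,p,\,n+p-1\}$. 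A short degree analysis of the top local cohomology of a shifted Veronese subring pins down the nonvanishing: $H^n\neq0$ iff some integer $s$ satisfies $cs\le a-n$ and $es\ge b$; $H^p\neq0$ iff some integer $s$ satisfies $cs\ge a$ and $es\le b-p$; while $H^{n+p-1}\neq0$ always (matching $\dim S(-a,-b)_\Delta=p+n-1$ from \Cref{remarkBigraded}). Hence $\depth_{S_\Delta}S(-a,-b)_\Delta$ is the least such $k$, and lies in $\{n,p,n+p-1\}$.

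It then remains to feed in the shifts from \Cref{Prop:shift}, all of which satisfy $0\le a\le n-1$ and $b\ge 0$. With $a\le n-1$ one has $a-n<0$, so the $H^n$-criterion forces $s<0$ while $es\ge b\ge0$ forces $s\ge0$; thus $H^n=0$ for every summand, giving $\depth_{S_\Delta}S(-a,-b)_\Delta\ge p$. For $i\le m-n+1$ this already yields $\depth(F_i)_\Delta-i\ge p-(m-n+1)=p+n-(m+1)$. For $i\ge m-n+2$, the constraint in \Cref{Prop:shift} gives $a=j\ge i-(m-n+1)\ge 1$, so any $s$ with $cs\ge a$ satisfies $s\ge1$ and hence $es\ge e\ge1$; the $H^p$-criterion $es\le b-p=n+i-1-j-p$ would then force $j\le n+i-p-2$, which is incompatible with $j\ge i-(m-n+1)$ precisely because $p>m$. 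Therefore $H^p=0$ as well, every summand is Cohen--Macaulay of depth $p+n-1$, and $\depth(F_i)_\Delta-i\ge (p+n-1)-m=p+n-(m+1)$. The summands $Q_k$ with $a=b=k\le n-1$, and the term $F_0=S$, are covered by the same computation and are in fact Cohen--Macaulay; taking the minimum over $i$ gives the claim.

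The main obstacle is the structural input of the second paragraph: establishing, via the Segre-product Künneth formula and a careful reading of the grading on the top local cohomology of Veronese subrings, that only the three cohomological degrees $n$, $p$, $n+p-1$ can occur, and identifying their exact numerical nonvanishing criteria. Once this is available, the remainder is an elementary inequality chase driven entirely by the hypothesis $p>m$.
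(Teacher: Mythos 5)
Your proposal is correct, and its skeleton is exactly the paper's: push the bigraded resolution of \Cref{Prop:shift} through the exact functor $(\_\!\_)_\Delta$, apply the depth lemma as in \Cref{rem:DepthRegularity}, and reduce to a depth bound for each summand $S(-a,-b)_\Delta$. The only divergence is in how that per-summand bound is obtained. The paper cites \Cref{remarkBigraded}(\ref{rem:S(-a,-b)deltaCM}) (i.e., \cite[Lemma 3.10]{ConHerTruVal}): $S(-a,-b)_\Delta$ is Cohen--Macaulay if and only if $\left\lfloor \frac{a-n}{c}\right\rfloor < \frac{b}{e}$ and $\left\lfloor \frac{b-p}{e}\right\rfloor < \frac{a}{c}$, and then observes that every shift in the resolution satisfies $0\le a\le n-1$ and $0\le b\le m<p$, so both inequalities hold trivially and every $(F_i)_\Delta$ is Cohen--Macaulay of depth $p+n-1$. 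You instead rederive this criterion from the Goto--Watanabe K\"unneth formula for local cohomology of Segre products; your nonvanishing criteria for $\HH^n$ and $\HH^p$ are precisely the negations of the two displayed inequalities, so you have in effect inlined the proof of the cited lemma rather than invoking it. Your two-case split (depth at least $p$ suffices when $i\le m-n+1$; full Cohen--Macaulayness when $i\ge m-n+2$) is valid but unnecessary: for every $P_i$ summand one has $b=n+i-1-j\le m<p$ --- this inequality is exactly what your ``incompatibility because $p>m$'' computation verifies --- so $\HH^p$ vanishes for all summands in all homological degrees, and every $(F_i)_\Delta$ is Cohen--Macaulay, which is the paper's uniform statement. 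The tradeoff: your argument is self-contained modulo Goto--Watanabe, at the cost of carrying the local cohomology bookkeeping yourself, while the paper's is shorter by citation; both proofs use the hypothesis $p>m$ in the same single place, namely to force $b-p<0$.
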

\begin{proof} 
By \Cref{rem:regularity}(\ref{reglemma}) and \Cref{rem:DepthRegularity}, we have 
$$\depth_{S_{\Delta}} (S/J)_{\Delta} \geq \min \{ \depth_{S_{\Delta}} (F_i)_{\Delta} - i\ \vert\ 0\leq i\leq m \}.$$
We see that $\dim \left(  (F_i)_{\Delta} \right) = p+n-1$, since $\dim\left( S(-a,-b)_{\Delta}\right) = p+n-1$ for any pair $(-a,-b)$ by \Cref{remarkBigraded}(\ref{rem:S(-a,-b)deltaCM}). The bigraded shifts $(-a,-b)$, described in \Cref{Prop:shift}, satisfy $0 \leq a < n$, and $0 \leq b \leq m$. Hence they satisfy the conditions in \Cref{remarkBigraded}(\ref{rem:S(-a,-b)deltaCM}), and therefore, $S(-a,-b)_{\Delta}$ is Cohen-Macaulay for these shifts. This implies that $\left(  F_i \right)_{\Delta} $ is Cohen-Macaulay for all $i$. 

Thus, $\depth\left(  (F_i)_{\Delta} \right) = p+n-1$ for each $i$, and hence we see that $\depth (S/J)_{\Delta} \geq p+n-(m+1)$, proving the result.
\end{proof}

\begin{cor}\label{thm:CMGenResInt}
	Suppose $p>m\geq n$ and $\dim (S/J)_\Delta\leq p+n-(m+1)$, then $(S/J)_\Delta$ is Cohen-Macaulay.
\end{cor}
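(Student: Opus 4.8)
The plan is to deduce this as an immediate consequence of \Cref{thm:DepthGenResInt}, since the genuine work has already been done there. The hypotheses $p > m \geq n$ are exactly those under which that theorem applies, so it furnishes the lower bound $\depth_{S_\Delta}(S/J)_\Delta \geq p+n-(m+1)$ for every choice of diagonal $\Delta$. The only additional ingredient I would invoke is the standard inequality $\depth M \leq \dim M$, valid for any finitely generated graded module $M$ over a Noetherian graded $\sk$-algebra, with depth computed with respect to the irrelevant maximal ideal; this is precisely the homological invariant whose coincidence with the Krull dimension defines the (graded) Cohen--Macaulay property. Applying this to the finitely generated graded $S_\Delta$-module $(S/J)_\Delta$ is the whole strategy.

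Concretely, I would chain together three facts: the hypothesis $\dim (S/J)_\Delta \leq p+n-(m+1)$, the depth bound from \Cref{thm:DepthGenResInt}, and the universal inequality just mentioned, to obtain
\[
\dim (S/J)_\Delta \;\leq\; p+n-(m+1) \;\leq\; \depth_{S_\Delta}(S/J)_\Delta \;\leq\; \dim (S/J)_\Delta .
\]
Forcing equality throughout yields $\depth_{S_\Delta}(S/J)_\Delta = \dim (S/J)_\Delta$, which is exactly the assertion that $(S/J)_\Delta$ is Cohen--Macaulay. There is no real obstacle to surmount at this stage: the substantive content lies entirely in \Cref{thm:DepthGenResInt}, where the depth estimate was extracted from the bigraded resolution of \eqref{total complex} via \Cref{rem:regularity}(\ref{reglemma}) together with the Cohen--Macaulayness of each diagonal module $S(-a,-b)_\Delta$ recorded in \Cref{remarkBigraded}(\ref{rem:S(-a,-b)deltaCM}). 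What remains here is purely the squeezing argument above, so the proof should be a single short line.
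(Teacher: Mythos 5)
Your proposal is correct and is exactly the argument the paper intends: the corollary is stated without proof precisely because it follows by combining the depth bound of \Cref{thm:DepthGenResInt} with the universal inequality $\depth \leq \dim$ and the dimension hypothesis, forcing $\depth_{S_\Delta}(S/J)_\Delta = \dim (S/J)_\Delta$. Nothing is missing.
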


\subsection*{Regularity and the Koszul property of $(S/J)_\Delta$}\label{subsec:KoszulDiagSubalg}\hfill{}\\
The following is the main theorem of this subsection.
 \begin{theorem}\label{thm:KoszulGeomResInt}
 With notations as before, $(S/J)_{\Delta}$ is Koszul for all $c \geq 1$ and $e \geq \frac{n}{2} $.
 \end{theorem}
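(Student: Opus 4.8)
The plan is to exhibit $(S/J)_\Delta$ as a quotient of a Koszul algebra and then invoke the Koszul criterion of \Cref{rem:regularity}(\ref{thm:koszulcriterion}). Since $(\_\!\_)_\Delta$ is exact, the surjection $S \twoheadrightarrow S/J$ induces a surjection of standard graded $\sk$-algebras $S_\Delta \twoheadrightarrow (S/J)_\Delta$, so that $(S/J)_\Delta \cong S_\Delta/J_\Delta$ for the homogeneous ideal $J_\Delta$. By \Cref{rem:regularity}(\ref{thm:Froberg}), the base $S_\Delta = (R_x)^{(c)} \underline{\otimes} (R_y)^{(e)}$ is Koszul. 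Hence, by \Cref{rem:regularity}(\ref{thm:koszulcriterion}), it suffices to prove the single numerical estimate $\reg_{S_\Delta}\left((S/J)_\Delta\right) \leq 1$ for every $c \geq 1$ and $e \geq n/2$.

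To bound this regularity I would apply $(\_\!\_)_\Delta$ to the bigraded resolution $\mathbb T_\bullet$ of \Cref{Prop:shift} and use \Cref{rem:DepthRegularity}, which already gives
\[
\reg_{S_\Delta}\left((S/J)_\Delta\right) \;\leq\; \max_{0 \leq i \leq m}\left\{ \reg_{S_\Delta}\left((F_i)_\Delta\right) - i \right\},
\]
where $F_i = Q_i \oplus P_i$. Each $(F_i)_\Delta$ is a direct sum of shifted diagonals $S(-a,-b)_\Delta$, and regularity of a direct sum is the maximum over its summands, so \Cref{rem:regularity}(\ref{regofshifteddiagonal}) yields $\reg_{S_\Delta}\left((F_i)_\Delta\right) = \max\left\{ \lceil A_i/c \rceil, \lceil B_i/e \rceil \right\}$, where $A_i$ and $B_i$ are the largest first and second shifts occurring in $F_i$. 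Reading the shifts off \Cref{Prop:shift}, one finds $A_i \leq i$ for all $i$, while the second shift $n+i-1-j$ is largest at the smallest admissible $j = \max\{0, i-(m-n+1)\}$, giving $B_i = \min\{n+i-1, m\}$.

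The statement then reduces to two inequalities for $1 \leq i \leq m$: that $\lceil A_i/c \rceil - i \leq 0$ and that $\lceil B_i/e \rceil - i \leq 1$. The first is immediate, since $c \geq 1$ forces $\lceil A_i/c \rceil \leq A_i \leq i$; note that the $x$-regularity never obstructs Koszulness, which is why no lower bound on $c$ is needed. The second is the crux of the argument and the step where the hypothesis $e \geq n/2$ must be used sharply. Since $i+1$ is an integer, $\lceil B_i/e \rceil \leq i+1$ is equivalent to $B_i \leq e(i+1)$, and I would establish the latter (for $n \geq 2$) through the chain
\[
2e(i+1) \;\geq\; n(i+1) \;=\; 2(n+i-1) + (n-2)(i-1) \;\geq\; 2(n+i-1) \;\geq\; 2B_i,
\]
whose first step is $2e \geq n$, whose middle identity makes the nonnegativity $(n-2)(i-1) \geq 0$ transparent, and whose last step uses $B_i \leq n+i-1$. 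The degenerate case $n=1$, where $B_i = i$, follows at once from $e \geq 1$.

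Combining the two inequalities gives $\reg_{S_\Delta}\left((F_i)_\Delta\right) - i \leq 1$ for every $i$, hence $\reg_{S_\Delta}\left((S/J)_\Delta\right) \leq 1$, and \Cref{rem:regularity}(\ref{thm:koszulcriterion}) concludes that $(S/J)_\Delta$ is Koszul. The only genuine obstacle is controlling the $y$-regularity $\lceil B_i/e \rceil$: this is precisely where $e \geq n/2$ is forced, and the estimate is tight, since the equality $(n-2)(i-1)=0$ occurs at $i=1$ (or $n=2$), indicating that the threshold $n/2$ cannot be lowered by this method.
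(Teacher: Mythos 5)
Your proposal is correct and takes essentially the same route as the paper: reduce via Fr\"oberg's theorem and the Koszul criterion of \Cref{rem:regularity}(\ref{thm:koszulcriterion}) to showing $\reg_{S_\Delta}\left((S/J)_\Delta\right) \leq 1$, apply $(\_\!\_)_\Delta$ to the bigraded resolution of \Cref{Prop:shift}, and bound $\left\lceil a_i^{\max}/c \right\rceil - i$ and $\left\lceil b_i^{\max}/e \right\rceil - i$ using the extremal shifts $a_i^{\max} \leq i$ and $b_i^{\max} = \min\{n+i-1, m\}$. The only difference is cosmetic: where the paper splits into the cases $m \leq 2n-1$ and $m > 2n-1$, your single identity $n(i+1) = 2(n+i-1) + (n-2)(i-1)$ disposes of all $i$ at once.
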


\begin{proof}
Note that $S_{\bigtriangleup}$ is Koszul by Remark \ref{rem:regularity}(\ref{thm:Froberg}). Using Remark \ref{rem:regularity}(\ref{thm:koszulcriterion}), it is enough to prove that $\reg_{S_{\bigtriangleup}} (S/J)_{\bigtriangleup} \leq 1$. Since $\reg_{S_{\Delta}} (S/J)_{\Delta} \leq \max \{ \reg_{S_{\Delta}} (F_i)_{\Delta} -i\ \vert\ 0\leq i\leq m \}$, we compute $\reg_{S_\Delta}((F_i)_\Delta)$ for all $i$. 

By Remark \ref{rem:regularity}(\ref{regofshifteddiagonal}), 
$$ \reg_{S_{\bigtriangleup}} (F_i)_{\bigtriangleup} = \max \left\{  {\small  \left\lceil  \frac{a_i^{\max}}{c}  \right\rceil,  \left\lceil  \frac{b_i^{\max}}{e}  \right\rceil } \right\},$$ where $a_i^{\max} = \max\{a\ \vert\ S(-a,-b) \text{ is a direct summand of } F_i\}$ and $b_i^{\max}$ is defined similarly.

Therefore we can write
$$ \reg_{S_{\bigtriangleup}} (S/J)_{\bigtriangleup} \leq  \max \left\{  {\small  \left\lceil  \frac{a^{\max}_i }{c}  \right\rceil } - i , {\small  \left\lceil  \frac{b^{\max}_i }{e}  \right\rceil } -i \;  \vert\ 0\leq i\leq m  \right\}.$$
By Proposition 2.3, notice that $a^{\max}_i=i$ for $i=0,1,\ldots, (n-2)$, and $a^{\max}_i=(n-1)$ for $i=(n-1),\ldots,m$. Therefore 
$$  \left\lceil  \frac{ a^{\max}_i }{c}  \right\rceil  -i  \leq \max \left\{ {\small   \left\lceil  \frac{i}{c}  \right\rceil } - i  \;  \vert\ 1 \leq i\leq (n-1)  \right\} \leq 1,  $$
for all $c \geq 1$. 

Similarly, when $e\geq \frac{n}{2}$, we can show that $\left\lceil  \frac{ b^{\max}_i }{e}  \right\rceil  -i \leq 1$ for all $i$. 
For example, when $m \leq 2n-1$, we have $b^{\max}_i= n-1 +i$ for $i=1,\ldots,(m-n)$ and $b^{\max}_i = m$ for $i= (m-n+1), \ldots, m$, and hence
 $$ \left\lceil  \frac{ b^{\max}_i }{e}  \right\rceil  -i \leq  \max \left\{ {\small   \left\lceil  \frac{n-1+i}{e}  \right\rceil } - i  \;  \vert\ 1 \leq i\leq (m-n+1)  \right\} =   \left\lceil \frac{n}{e} \right\rceil -1 \leq 1,$$ since $e \geq \frac{n}{2}$. 
The calculations when $m > 2n - 1$ are similar, proving the result.
 \end{proof}
\begin{cor}
	If $n=2$, then the diagonal subalgebra $(S/J)_\Delta$ is always Koszul.
\end{cor}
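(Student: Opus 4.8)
The final statement is the corollary asserting that when $n=2$, the diagonal subalgebra $(S/J)_\Delta$ is always Koszul, for all $c \geq 1$ and $e \geq 1$.

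The plan is to derive this directly from \Cref{thm:KoszulGeomResInt}, which guarantees Koszulness whenever $e \geq n/2$. The key observation is that the hypothesis $e \geq n/2$ becomes vacuous once $n=2$: substituting $n=2$ gives the condition $e \geq 1$, which is automatically satisfied since $e$ is a positive integer by convention (see the Notations subsection, item (a), where $c$ and $e$ denote positive integers). Thus there is no restriction on $e$ whatsoever, and combined with the fact that \Cref{thm:KoszulGeomResInt} already imposes no restriction on $c$ beyond $c \geq 1$, we conclude that $(S/J)_\Delta$ is Koszul for every admissible diagonal $\Delta$.

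Concretely, I would argue as follows. Fix $n=2$. Since $e$ is a positive integer, we have $e \geq 1 = 2/2 = n/2$. Therefore the hypotheses of \Cref{thm:KoszulGeomResInt} are met for all $c \geq 1$ and all $e \geq 1$, and that theorem immediately yields that $(S/J)_\Delta$ is Koszul. Because every admissible $(c,e)$-diagonal has $c,e \geq 1$, the phrase ``always Koszul'' is justified: there is no choice of $\Delta$ for which the hypothesis fails.

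This is essentially a one-line consequence, so there is no genuine obstacle to overcome; the only thing to verify is that the edge case $e=1$ is covered, which it is, since the inequality $e \geq n/2$ is non-strict and $1 \geq 1$. I would make sure to phrase the proof so that it explicitly invokes the positivity of $e$ from the standing notation, as this is the crux that removes the constraint entirely when $n=2$.
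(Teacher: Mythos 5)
Your proof is correct and matches the paper's (implicit) reasoning exactly: the corollary is stated without proof precisely because it follows from \Cref{thm:KoszulGeomResInt} by observing that $e \geq n/2 = 1$ holds automatically for any positive integer $e$ when $n = 2$. Nothing further is needed.
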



\section{Applications to perfect ideals of height two}\label{sec:Appl}
A natural source of geometric residual intersections are the Rees algebras of certain classes of perfect ideals of height two, which are linearly presented in  a polynomial ring. We use the following:

\begin{setup}\label{setup} Let $I$ be a homogeneous perfect ideal of height two in a polynomial ring $R_x = \sk[x_1,\dots,x_n]$ with a presentation matrix $\Phi$ (\Cref{ReesAlgebraDefn}). We assume that $I$ satisfies the following properties:
\begin{enumerate}
	\item $\mu(I)=p> n$.
	\item $I$ satisfies $\mu(I_\p)\leq \hgt \p$ for every $\p\in V(I)\backslash \{ \m\}$. Equivalently, $\hgt(I_{p-i}(\Phi))> i$ for $0\leq i<n$ (e.g., \cite[Proposition 20.6]{EisenbudCommAlg}).
	\item The presentation matrix $\Phi$ of $I$ is linear in the entries of $R_x$.
\end{enumerate}
\end{setup}
Since $I$ is a perfect ideal of height two, the presentation matrix $\Phi$ of $I$ is of size $p\times (p-1)$ by the Hilbert-Burch theorem (e.g., \cite[Theorem 20.15]{EisenbudCommAlg}). This also shows that the generators of $I$ are of degree $p-1$. Let $S=R_x[y_1,\dots,y_p]$ with bigrading $\deg x_i=(1,0)$, and $\deg y_j=(0,1)$. With notation as in \Cref{remarkReesAlgebras}(b), let $m = p-1$, and $[z_1\cdots z_{p-1}]=[y_1\cdots y_p]\ \Phi$. We can rewrite this as  $[z_1\cdots z_{p-1}]=[x_1\cdots x_n]\cdot \phi$ where $\phi$ is a $n\times (p-1)$ matrix with entries in $R_y$. Since $\Phi$ is linear in $\sk[x_1,\dots,x_n]$, we have $\phi$ is linear in $R_y$.

\begin{remark}\label{heightTwoperfectRemark}
	\rm{
With the notation as in \Cref{setup}, and $\phi$ as above, we get the following from \\\cite[Theorems 1.2, 1.3]{MorUlr} (and their proofs): The Rees algebra of $I$ can be written as   $\mathcal{R}(I)\cong S/J$, where  $J=\langle z_1,\dots,z_{p-1}\rangle+I_n(\phi)$. Furthermore, $\hgt (I_n(\phi))\geq m-n+1$, $J=\langle z_1,\dots,z_{p-1}\rangle:\langle \underline{x}\rangle$, and in particular, $J$ is a geometric $m$-residual intersection of $\langle \underline{x}\rangle$.  
}
\end{remark}

Thus \Cref{Prop:shift} gives a free resolution of $\mathcal R(I)$ over $S$. We can now apply the results from \Cref{Sec:DigGeoRes} to $\mathcal R(I)$.

\begin{theorem}\label{thm:ReesAlg}
With the notation as in \Cref{setup}, we have the following:
\begin{enumerate}[a)]
\item\label{prop: linearresolution} $I^s$ has a linear resolution for all $s \in \mathbb N$.
\item\label{thm:CM} $(\mathcal R(I))_{\Delta}$ is Cohen-Macaulay for all $\Delta = (c,e)$ with $c>(p-1)e$ or $c=e=1$.
\item $(\mathcal R(I))_{\Delta}$ is Koszul for all $\Delta = (c,e)$ with $c\geq 1$ and $ e \geq \frac{n}{2}$.
\end{enumerate}
\end{theorem}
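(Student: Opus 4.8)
The plan is to obtain all three parts by specializing the results of \Cref{Sec:DigGeoRes} to the Rees algebra $\mathcal{R}(I)\cong S/J$. By \Cref{heightTwoperfectRemark}, the ideal $J=\langle z_1,\dots,z_{p-1}\rangle+I_n(\phi)$ is a geometric $m$-residual intersection of $\langle\underline x\rangle$ with $m=p-1$ and $\hgt(I_n(\phi))\geq m-n+1$; moreover $p>m\geq n$ holds because $p>n$ by \Cref{setup}. Hence the hypotheses of \Cref{Prop:shift}, \Cref{thm:DepthGenResInt}, \Cref{thm:CMGenResInt}, and \Cref{thm:KoszulGeomResInt} are all in force, and the generators of $I$ have degree $d=p-1$ by Hilbert-Burch.

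For (a), I would invoke R\"omer's bound, \Cref{rem:regularity}(\ref{thm:Romer}). Since \Cref{Prop:shift} gives $\reg_x(\mathcal{R}(I))=\reg_x(S/J)=0$, the ``in particular'' clause of that statement shows directly that $I^s$ has a linear resolution for every $s$; concretely $\reg_{R_x}(I^s)\leq s(p-1)$, which together with the lower bound forced by the generating degree gives $\reg_{R_x}(I^s)=s(p-1)$.

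For (b), the idea is to combine the depth estimate with a dimension count and feed both into \Cref{thm:CMGenResInt}. With $m=p-1$, \Cref{thm:DepthGenResInt} specializes to $\depth(\mathcal{R}(I))_\Delta\geq p+n-(m+1)=n$, and the criterion of \Cref{thm:CMGenResInt} requires only $\dim(\mathcal{R}(I))_\Delta\leq n$. When $c>(p-1)e$, the power $I^e$ is generated in degree $e(p-1)\leq c-1$, so \Cref{remarkReesAlgebras}(\ref{rem:dimofdiagonalofreesalgebra}) gives $\dim(\mathcal{R}(I))_\Delta=n$; for the boundary diagonal $c=e=1$, the same dimension follows from \Cref{remarkReesAlgebras}(\ref{rem:dimofdiagonalofreesalgebraForc=e=1}). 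In either case the dimension hypothesis of \Cref{thm:CMGenResInt} is satisfied (equivalently, $\depth=\dim=n$), so $(\mathcal{R}(I))_\Delta$ is Cohen-Macaulay.

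Part (c) is immediate from \Cref{thm:KoszulGeomResInt} applied to $S/J$, which gives Koszulness for all $c\geq 1$ and $e\geq n/2$. The only genuinely nonformal point is the bookkeeping in (b): one must align the hypothesis that ``$I^e$ is generated by forms of degree at most $c-1$'' in \Cref{remarkReesAlgebras}(\ref{rem:dimofdiagonalofreesalgebra}) with the stated range $c>(p-1)e$, and handle the diagonal $c=e=1$ separately through \Cref{remarkReesAlgebras}(\ref{rem:dimofdiagonalofreesalgebraForc=e=1}), since that case lies outside the inequality $c>(p-1)e$.
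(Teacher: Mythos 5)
Your proposal is correct and follows essentially the same route as the paper: part (a) via R\"omer's bound together with $\reg_x(\mathcal{R}(I))=0$ from \Cref{Prop:shift}, part (b) by combining the depth bound of \Cref{thm:DepthGenResInt} (which gives $\depth \geq n$ since $m=p-1$) with the dimension computations of \Cref{remarkReesAlgebras}(\ref{rem:dimofdiagonalofreesalgebra}) and (\ref{rem:dimofdiagonalofreesalgebraForc=e=1}), and part (c) directly from \Cref{thm:KoszulGeomResInt}. Your explicit verification that $c>(p-1)e$ implies $I^e$ is generated in degrees at most $c-1$ is a detail the paper leaves implicit, but it is the same argument.
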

\begin{proof}
a) This follows from \Cref{rem:regularity}(f), since $\reg_{x} \left( \mathcal{R}(I) \right) = 0$ by \Cref{Prop:shift}.\\
b) Recall that $m=p-1$ and,  by Remark \ref{remarkReesAlgebras} (\ref{rem:dimofdiagonalofreesalgebra}) and (\ref{rem:dimofdiagonalofreesalgebraForc=e=1}) , we have $\dim \mathcal{R}(I)_\Delta=n$ in both cases. Thus, by \Cref{thm:DepthGenResInt} and \Cref{thm:CMGenResInt}, we have $\depth (\mathcal{R}(I)_\Delta)=\dim (\mathcal{R}(I)_\Delta)=n$ for all $\Delta$.\\
c) This is follows immediately from \Cref{thm:KoszulGeomResInt}.
\end{proof}

\begin{remark}{\rm\hfill{}
\begin{enumerate}[a)] 
\item The special case of \Cref{thm:ReesAlg}(a) when $p=n+1$ is proved in \cite[Corollary 5.11]{Tim}.
\item In \cite[Corollary 3.15]{ConHerTruVal}, the authors show that the ${\mathcal{R}(I)}_\Delta$ is Cohen-Macaulay for ${{\Delta}}$ such that $c,e\gg0$. In Theorem \ref{thm:ReesAlg}(b), we give an explicit range for $c,e$. 
\item If $c>(p-1)e$, then ${\mathcal{R}(I)}_\Delta\cong \sk[(I^e)_c]$ by \cite[Lemma 1.2]{ConHerTruVal}. Thus by Theorem \ref{thm:ReesAlg}, $\sk[(I^e)_c]$ is Cohen-Macaulay. In \cite[Corollary 4.4]{CutHer}, the authors show that there exists a positive integer $f$ such that $\sk[(I^e)_c]$ is Cohen-Macaulay when $c\geq fe$. In Theorem \ref{thm:ReesAlg}(b), we give an explicit values of $f$.
\end{enumerate}
}\end{remark}

\section{Example}\label{Sec:Example}
Let $I$ be an in $\sk[x_1,x_2,x_3]$ whose presentation matrix is
\begin{align*}
\Phi=\begin{bmatrix}
x_1 & 0 & 0 &0\\
x_2 & x_1 & 0 & 0\\
x_3 & x_2 & x_1 & 0\\
0 & x_3 & x_2 & x_1\\
0 & 0 &x_3 & x_2
\end{bmatrix}.
\end{align*}
Notice that $\hgt(I_4(\Phi))\geq 2$ (as $x_1^4,x_2^4-3x_1x_2^2x_3+x_1^2x_3^2\in I_4(\Phi)$). By the Hilbert-Burch theorem (e.g., \cite[Theorem 20.15]{EisenbudCommAlg}), $I$ is a perfect ideal of height two in $\sk[x_1,x_2,x_3]$. One can easily check (2) of \Cref{setup} by noticing that $\hgt (I_4(\Phi))\geq 2, \hgt (I_3(\Phi))=3$. Since $\Phi$ is linear in $\sk[x_1,x_2,x_3]$, all the hypothesis of \Cref{setup} is satisfied. 

Now let $S=\sk[y_1,\dots,y_5,x_1,x_2,x_3]$ with bigrading $\deg y_i=(0,1)$ and $\deg x_i=(1,0)$. As mentioned in \Cref{heightTwoperfectRemark}, the Rees algebra $\mathcal{R}(I)\cong S/J$ where $J=\langle z_1,\dots,z_4\rangle+I_3(\phi)$ where $[z_1\cdots z_4]=[y_1\cdots y_5]\cdot \Phi=[x_1~x_2~x_3]\cdot\phi$ and $\phi$ can be obtained as follows
\begin{align*}
[z_1\cdots z_4]&=[y_1\cdots y_5]\cdot \Phi\\
&=\begin{bmatrix}
x_1y_1+x_2y_2+x_3y_3& x_1y_2+x_2y_3+x_3y_4& x_1y_3+x_2y_4+x_3y_5& x_1y_4+x_2y_5
\end{bmatrix}\\
&=[x_1~x_2~x_3]\begin{bmatrix}
y_1 & y_2 & y_3 & y_4\\
y_2 & y_3 &y_4 & y_5\\
y_3 & y_4 & y_5 & 0
\end{bmatrix}=[x_1~x_2~x_3]\cdot\phi.
\end{align*}
The discussion in \Cref{heightTwoperfectRemark} shows that $J$ is a geometric residual intersection with $\hgt \phi\geq 2$.
Using \Cref{Prop:shift} we can compute the complete resolution of $S/J$ as follows:
	 
\begin{multline*}
0\rightarrow S(-2,-4)^6\rightarrow S(-2,-3)^{12}\oplus S(-1,-4)^{8}\rightarrow S(-1,-3)^{12}\oplus S(0,-4)^3\oplus \\S(-2,-2)^6\rightarrow S(0,-3)^4\oplus S(-1,-1)^4\rightarrow S.
\end{multline*}
By \Cref{thm:ReesAlg}, we have that $\mathcal{R}(I)_\bigtriangleup\cong (S/J)_{\Delta} $ is Cohen-Macaulay for all $\Delta$ with $c>4e$ or $c=e=1$, and is Koszul for $ e \geq \frac{3}{2}$.

\end{document}